\documentclass[11pt]{article}

\usepackage[T1]{fontenc}
\usepackage{lmodern}
\usepackage[a4paper,margin=26mm]{geometry}
\usepackage{amsmath,amssymb,amsthm}
\usepackage{microtype}
\usepackage{needspace}
\usepackage{xcolor}

\definecolor{linkblue}{rgb}{0.05,0.18,0.45}

\usepackage[
  colorlinks=true,
  linkcolor=linkblue,
  citecolor=linkblue,
  urlcolor=linkblue
]{hyperref}

\hypersetup{
  pdftitle={Sharp Diameter Bounds for Nonnegative Cyclotomic Multiples},
  pdfauthor={Hu Tan and Ying Zhang},
  pdfsubject={Cyclotomic polynomials and integer tilings}
}

\newtheorem{theorem}{Theorem}[section]
\newtheorem{proposition}[theorem]{Proposition}
\newtheorem{lemma}[theorem]{Lemma}
\newtheorem{corollary}[theorem]{Corollary}

\theoremstyle{definition}
\newtheorem{example}[theorem]{Example}

\theoremstyle{remark}
\newtheorem{remark}[theorem]{Remark}

\numberwithin{equation}{section}

\DeclareMathOperator{\supp}{supp}
\DeclareMathOperator{\diam}{diam}
\DeclareMathOperator{\lcm}{lcm}

\newcommand{\T}{\mathbb T}
\newcommand{\Z}{\mathbb Z}
\newcommand{\R}{\mathbb R}
\newcommand{\E}{\mathbb E}
\newcommand{\NN}{\mathbb Z_{\ge0}}
\newcommand{\br}[2]{[\,#1\,]_{#2}}

\title{Sharp Diameter Bounds for Nonnegative\\Cyclotomic Multiples}

\author{%
\begin{tabular}[t]{c}
Hu Tan\textsuperscript{1}\\[3pt]
\small\textsuperscript{1}\,Academy of Mathematics and Systems Science\\
\small Chinese Academy of Sciences\\
\small Beijing 100190, China\\
\small\texttt{tanhu2020@amss.ac.cn}
\end{tabular}
\and
\begin{tabular}[t]{c}
Ying Zhang\textsuperscript{2}\\[3pt]
\small\textsuperscript{2}\,School of Mathematical Sciences\\
\small Soochow University\\
\small Suzhou 215006, China\\
\small\texttt{yzhang@suda.edu.cn}
\end{tabular}%
}

\date{}
\begin{document}
\maketitle
\begin{abstract}
Let \(N\ge2\) and let \(p\) be its least prime divisor. We prove that every nonzero polynomial with nonnegative real coefficients divisible by \(\Phi_N\) has support diameter at least \((p-1)N/p\). Equality holds precisely for positive scalar multiples of monomial shifts of the \(p\)-term geometric sum \(\sum_{j=0}^{p-1} X^{jN/p}\), thereby proving a conjecture of Steinberger. The proof turns cyclotomic divisibility into the vanishing of the first \(p-1\) Fourier moments of a positive measure on the circle and then applies a classical extremal trigonometric polynomial. As a consequence, we establish the Coven--Meyerowitz diameter bound under their tiling conditions and determine its equality cases. Longer initial intervals of vanishing Fourier coefficients yield stronger diameter bounds, including an explicit refinement in terms of the prime-power divisor sets. The extremal trigonometric polynomial also yields a quantitative concentration estimate for measures and cyclotomic multiples with near-minimal support diameter.

\end{abstract}

\noindent\textit{2020 Mathematics Subject Classification.}
Primary 11C08; Secondary 11B75, 05B45, 42A05.

\smallskip
\noindent\textit{Key words and phrases.}
Cyclotomic polynomials, nonnegative coefficients, integer tilings,
Fourier moments, diameter bounds.

\section{Introduction and main result}

The cyclotomic polynomial $\Phi_N$ has the primitive $N$th roots of unity
as its zeros, but its coefficients need not be nonnegative. A natural
extremal question asks for the least degree of a nonzero
multiple with nonnegative coefficients. If $p$ is the least prime
divisor of $N$, the geometric sum
\begin{equation}\label{eq:fiber}
 G_{N,p}(X)=1+X^{N/p}+\cdots+X^{(p-1)N/p}
\end{equation}
is such a multiple. Steinberger conjectured that it is the unique monic
multiple of minimum degree \cite[Conjecture~1]{Steinberger}.

For a nonzero polynomial $F(X)=\sum_jc_jX^j$, write
\[
 \supp F=\{j:c_j\ne0\},\qquad
 D(F)=\max\supp F-\min\supp F.
\]
Our main result proves this conjecture in the translation-invariant
form that will be useful for integer tilings.

\Needspace{12\baselineskip}
\begin{theorem}\label{thm:main}
Let $N\ge2$ and let $p$ be its least prime divisor. If
$F\in\R[X]\setminus\{0\}$ has nonnegative coefficients and $\Phi_N\mid F$,
then
\begin{equation}\label{eq:main}
 D(F)\ge\frac{p-1}{p}N.
\end{equation}
Equality holds if and only if
\begin{equation}\label{eq:main-equality}
 F(X)=cX^bG_{N,p}(X),\qquad c>0,\quad b\in\NN.
\end{equation}
In particular, $G_{N,p}$ is the unique monic polynomial of least degree
with nonnegative real coefficients divisible by $\Phi_N$.
\end{theorem}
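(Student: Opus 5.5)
Following the abstract's outline, the plan is to turn divisibility by $\Phi_N$ into the vanishing of low Fourier moments of a positive measure, and then use an extremal nonnegative trigonometric polynomial.

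\emph{Setup.} After multiplying by $X^{-\min\supp F}$ we may assume $\min\supp F=0$, so $D(F)=\deg F=:d$, and we set $F=\sum_{j=0}^d c_jX^j$ with $c_j\ge0$. Let $\zeta=e^{2\pi i/N}$. For every $k$ coprime to $N$, the root $\zeta^k$ is a zero of $\Phi_N$, so $F(\zeta^k)=0$.

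Since $p$ is the least prime divisor of $N$, every $k\in\{1,\dots,p-1\}$ is coprime to $N$. Put
\[
\mu=\sum_j c_j\delta_{\theta_j},\qquad \theta_j=2\pi j/N .
\]
This is a positive measure on the arc $[0,2\pi d/N]$. Its moments satisfy
\[
\hat\mu(k)=\sum_j c_j\zeta^{jk}=F(\zeta^k)=0\qquad\text{for }1\le |k|\le p-1,
\]
the negative indices following by conjugation. So $\mu$ is a nonzero positive measure whose Fourier moments $1,\dots,p-1$ all vanish.

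\emph{Key lemma.} Suppose a positive measure on $\T$ has $\hat\mu(k)=0$ for $1\le|k|\le n-1$. Then its support cannot lie in an open arc of length $2\pi(n-1)/n$. If it lies in a closed arc of that length, it is supported on the $n$ points $\alpha+2\pi m/n$, $m=0,\dots,n-1$, with equal weights.

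To prove this, take the classical extremal trigonometric polynomial of degree $n-1$ adapted to the complementary arc, namely a Fejér-type kernel or the polynomial from Chebyshev's extremal problem for an arc. It should have constant term $1$, be $\le 0$ on a closed arc $I$ of length $2\pi(n-1)/n$, and vanish on $I$ exactly at the $n$ equispaced points. A concrete candidate: with $T_{n-1}$-like structure, take
\[
P(\theta)=1-\frac{1}{\,\cos^{2}(\pi/2n)\,}\cdots
\]
rotated so its positivity region is the complementary gap. More simply, let $P(\theta)=-\bigl(\text{Chebyshev }T_n(\cos(\theta/2)/\cos(\pi/2n))\bigr)$-type expressions. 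The cleanest route is to take $P$ to be a negative multiple of $\prod_m$-structure built from $1-\cos(n(\theta-\alpha))$, corrected to degree $n-1$.

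Pairing then gives
\[
\int P\,d\mu=\hat\mu(0)>0,
\]
since only the constant term survives. But if $\supp\mu\subset I$ we also have $\int P\,d\mu\le 0$, a contradiction unless equality forces everything. Finding and verifying this polynomial (with the exact sign pattern and its zero set on $I$) is the main obstacle. My first try is the polynomial
\[
P(\theta)=\frac{\cos(n\phi)\ \text{expressed via }\cos\phi\ }{\cdots}
\]
where $\cos(\theta/2)=\cos(\pi/2n)\cos\phi$. Here $T_{2n}$ in the variable $\cos(\theta/2)/\cos(\pi/2n)$ has even degree and hence degree $n$ in $\cos\theta$, too high. I would therefore instead use the known sharp result that a positive measure on an arc of length $<2\pi(n-1)/n$ cannot have its first $n-1$ moments vanish (a Turán/Fejér-type problem) and reduce to the quadrature identity $\int Q\,d\mu=Q$-average, valid for all degree-$\le n-1$ trigonometric $Q$ with the equispaced $n$-point rule.

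\emph{Conclusion of the theorem.} Apply the lemma with $n=p$. The support is contained in the arc $[0,2\pi d/N]$, so we need $2\pi d/N\ge 2\pi(p-1)/p$, that is, $d\ge (p-1)N/p$, which is \eqref{eq:main}.

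In the equality case, the lemma says $\mu$ puts equal mass on the points $2\pi m/p$ for $m=0,\dots,p-1$, which correspond to the exponents $mN/p$. Hence $F=cG_{N,p}$ after the shift, which gives \eqref{eq:main-equality}. The converse holds because $\Phi_N$ divides $G_{N,p}=(X^N-1)/(X^{N/p}-1)$.

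Uniqueness among monic polynomials of least degree follows at once: such a polynomial has $b=0$ and $c=1$.
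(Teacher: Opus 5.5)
There is a genuine gap: the arc lemma is never proved, and the certificate you look for cannot exist.

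Your reduction is correct and is essentially the paper's. You shift so that $\min\supp F=0$, form the coefficient measure, and use that $1,\dots,p-1$ are coprime to $N$ to get $\widehat\mu(k)=0$ for $1\le|k|\le p-1$. Deducing $D(F)\ge(p-1)N/p$ and its equality case from an arc lemma is also fine; when $D(F)\ge N$ your arc wraps around, but then the bound is trivial. The problem is that the arc lemma carries all of the analytic content. Let $\nu$ be the uniform measure on the $n$ equispaced nodes. It lies in the closed arc $I$ and satisfies $\widehat\nu(k)=0$ for $1\le|k|\le n-1$, so $\int P\,d\nu$ equals the constant term of every $P$ of degree at most $n-1$. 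Hence no $P$ of degree at most $n-1$ can have constant term $1$ and be $\le0$ on $I$, let alone vanish at the nodes. At the critical length the extremal polynomial must have constant term $0$, and the equality case needs its exact zero set on $I$. You abandon your Chebyshev attempts, and appealing to a \emph{known sharp result} simply assumes the lemma, including its equality case for arbitrary positive weights.

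The missing ingredient is the paper's sine polynomial. Take $t=\pi/n$, $v_j=(-1)^j\sin((j+1)t)$ and $Q(z)=\sum_{j=0}^{n-2}v_jz^j$. This is the bottom eigenvector of the tridiagonal compression of multiplication by $\cos\theta$ to $1,z,\dots,z^{n-2}$. The functions $|Q(e^{i\theta})|^2$ and $\cos\theta\,|Q(e^{i\theta})|^2$ have degree at most $n-1$. So for a probability measure with the vanishing moments, $\int|Q|^2\,d\mu=n/2$ and $\int\cos\theta\,|Q|^2\,d\mu=\sum_jv_jv_{j+1}=-\cos t\cdot n/2$. The last step uses the recurrence $v_{j-1}+v_{j+1}=-2\cos t\,v_j$ with $v_{-1}=v_{n-1}=0$. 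If $\supp\mu\subset[-\alpha,\alpha]$, then $\cos\theta\ge\cos\alpha$ on the support, which gives $\cos\alpha\le-\cos t$, i.e.\ $\alpha\ge\pi-\pi/n$. Equivalently, $(\cos\alpha-\cos\theta)|Q|^2$ is the certificate you wanted for shorter arcs: it is $\le0$ on $[-\alpha,\alpha]$ and has constant term $(\cos\alpha+\cos t)\,n/2>0$. For equality, use the identity $(1+2\cos t\,z+z^2)Q(z)=\sin t\,(1+(-z)^n)$. Away from $\theta=\pm(\pi-t)$ it shows that
\[(\cos\theta+\cos t)|Q(e^{i\theta})|^2=\tfrac12\sin^2t\,\frac{1+(-1)^n\cos n\theta}{\cos\theta+\cos t},\]
which is exactly the \emph{$1\pm\cos n\theta$ corrected to degree $n-1$} you were groping for. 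This function is nonnegative on the critical arc, has $\mu$-integral zero, and vanishes there exactly at the $n$ nodes. Discrete Fourier inversion then forces equal weights.
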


Steinberger proved the conjecture when $N$ is even, when $N$ is a prime
power, or when its distinct prime factors $p=p_1<p_2<\cdots<p_s$ satisfy
\begin{equation}\label{eq:old-condition}
 \frac2{p_1}>\sum_{i=2}^s\frac1{p_i}
\end{equation}
\cite[Theorem~1]{Steinberger}. This includes every $N$ with at most
three distinct prime factors. Theorem~\ref{thm:main} removes the
restriction \eqref{eq:old-condition}.

The connection with integer tilings concerns a specific period. For a
finite set $A\subset\Z$ with $|A|\ge2$, translate $A$ into $\NN$ and define
\begin{equation}\label{eq:SA}
 A(X)=\sum_{a\in A}X^a,\qquad
 S_A=\{q^e:q\text{ prime},\ e\ge1,\ \Phi_{q^e}\mid A(X)\}.
\end{equation}
The Coven--Meyerowitz conditions are
\begin{align*}
 \textnormal{(T1)}\quad & |A|=\prod_{u\in S_A}\Phi_u(1),\\
 \textnormal{(T2)}\quad & \Phi_{u_1\cdots u_k}\mid A(X)
 \quad\text{if }u_1,\ldots,u_k\in S_A
 \text{ are powers of distinct primes}.
\end{align*}
They imply that $A$ tiles $\Z$ with period $M=\lcm(S_A)$
\cite[Theorem~A]{CM}. Since (T2) gives $\Phi_M\mid A(X)$,
Theorem~\ref{thm:main} implies the following result.

\begin{corollary}\label{cor:CM}
Let $A\subset\Z$ be finite with $|A|\ge2$, and suppose that $A$ satisfies
(T1) and (T2). Set $M=\lcm(S_A)$, and let $p$ be the least prime divisor
of $|A|$. Then
\begin{equation}\label{eq:CM}
 \diam(A)\ge\frac{p-1}{p}M.
\end{equation}
Equality holds precisely when, for some $n\ge1$ and $b\in\Z$,
\begin{equation}\label{eq:CM-equality}
 M=p^n,\qquad A=b+p^{n-1}\{0,1,\ldots,p-1\}.
\end{equation}
\end{corollary}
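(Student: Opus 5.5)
The plan is to apply Theorem~\ref{thm:main} with $N=M$ and $F=A(X)$, after first checking that the prime $p$ in the corollary agrees with the least prime divisor of $M$. Translate $A$ so that $\min A=0$. This changes neither $S_A$ nor $\diam(A)$: if $A'=A+t$, then $A'(X)=X^tA(X)$, and $X^t$ has no cyclotomic factor.

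\textbf{Step 1: identifying $p$.} For a prime power $u=q^e$ we have $\Phi_{q^e}(1)=q$. So (T1) says that $|A|$ equals the product of the primes $q$, counted once for each $q^e\in S_A$. Since $|A|\ge2$, this forces $S_A\ne\emptyset$, and hence $M=\lcm(S_A)\ge2$. It also shows that the prime divisors of $|A|$ are exactly the primes $q$ occurring in $S_A$, which are exactly the prime divisors of $M$. Therefore $p$ is also the least prime divisor of $M$.

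\textbf{Step 2: the bound.} By (T2), applied to the prime-power parts of $M$ (one element of $S_A$ of maximal exponent for each prime), we get $\Phi_M\mid A(X)$. The polynomial $A(X)$ is nonzero with coefficients in $\{0,1\}$, and $D(A(X))=\diam(A)$. Theorem~\ref{thm:main} with $N=M$ then gives \eqref{eq:CM}.

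\textbf{Step 3: the equality case.} Suppose equality holds in \eqref{eq:CM}. By the equality clause of Theorem~\ref{thm:main}, $A(X)=cX^bG_{M,p}(X)$. Because the coefficients of $A(X)$ are $0$ or $1$, we must have $c=1$, and hence $A=b+(M/p)\{0,\dots,p-1\}$ with $|A|=p$. Now (T1) reads $p=\prod_{u\in S_A}\Phi_u(1)$, and each factor is a prime. So $S_A$ consists of a single element $u=p^n$. Then $M=p^n$, and so $M/p=p^{n-1}$, which is \eqref{eq:CM-equality}.

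For the converse, let $A=b+p^{n-1}\{0,\dots,p-1\}$ and put $Y=X^{p^{n-1}}$. Then
\[
A(X)=X^b\,\frac{Y^p-1}{Y-1}=X^b\,\Phi_p(Y)=X^b\,\Phi_{p^n}(X).
\]
The nonzero roots of $A(X)$ are therefore exactly the primitive $p^n$th roots of unity, so $S_A=\{p^n\}$. Condition (T1) holds because $|A|=p=\Phi_{p^n}(1)$, and (T2) holds trivially since $S_A$ has only one element. Finally $M=p^n$ and $\diam(A)=(p-1)p^{n-1}=\frac{p-1}{p}M$, so equality is attained.

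\textbf{Main obstacle.} The only delicate point is Step 1: the corollary defines $p$ through $|A|$, while Theorem~\ref{thm:main} uses the least prime divisor of $M$. Condition (T1) is exactly what makes these two primes coincide.
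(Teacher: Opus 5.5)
Your proposal is correct and follows essentially the same route as the paper. In both, (T1) with $\Phi_{q^e}(1)=q$ matches the least prime of $|A|$ to that of $M$, and (T2) on the maximal prime powers gives $\Phi_M\mid A(X)$, so Theorem~\ref{thm:main} applies. The equality clause with zero--one coefficients and (T1) then forces $|A|=p$ and $S_A=\{p^n\}$. Your explicit converse check via $\Phi_p(X^{p^{n-1}})=\Phi_{p^n}(X)$ spells out what the paper states in one line.
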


Coven and Meyerowitz asserted both \eqref{eq:CM} and its equality case
in the remarks following \cite[Lemma~2.1]{CM}, without the hypotheses
(T1) and (T2). \L aba and Zakharov explain why the unrestricted assertion
fails and ask whether the inequality holds under these hypotheses
\cite[Section~4, equation~(14)]{LZ}. Corollary~\ref{cor:CM} proves the
latter statement, including the asserted equality case. It does not require,
or prove, the separate assertion that every finite integer tile
satisfies (T2).

The proof of Theorem~\ref{thm:main} uses consecutive Fourier moments.
Because every integer $1\le k<p$ is coprime to $N$, cyclotomic
divisibility gives
\begin{equation}\label{eq:intro-moments}
 F(e^{2\pi ik/N})=0\qquad(1\le k<p).
\end{equation}
After normalization, the coefficients of $F$ therefore form a positive
measure on the circle with its first $p-1$ moments equal to zero.
Such a measure cannot leave an open arc of length greater than
$2\pi/p$ empty. The precise support bound and its equality case follow
from the sine vector of a finite tridiagonal matrix.

This analytic ingredient has a classical background. The extremal
polynomial is the one associated with Fej\'er's coefficient inequality
for nonnegative trigonometric polynomials \cite{Fejer}. For finitely
supported probability measures, the moment condition says that the
associated positive quadrature rule is exact for trigonometric
polynomials of the prescribed degree with respect to Haar measure;
see \cite[Section~1]{Peherstorfer}. For equal weights it defines a
spherical design on the circle, and the gap bound is the circle case
of the covering bound of Fazekas and Levenshtein \cite[Theorem~2]{FL}.
We include an elementary proof for arbitrary positive weights, together
with the equality case. For cyclotomic multiples, the required
vanishing moments follow from \eqref{eq:intro-moments}.

The rest of this paper is organized as follows. 
Section~\ref{sec:circle} proves the support bound, and
Section~\ref{sec:cyclotomic} proves the main theorem and a refinement
using the full set of cyclotomic divisors. Section~\ref{sec:tilings}
returns to integer tilings and extracts a stronger bound from (T2).
Section~\ref{sec:stability} gives a quantitative version of the equality
case, including an estimate in the original exponent scale.

\section{Positive measures with vanishing moments}\label{sec:circle}

We identify the circle with $\T=\R/(2\pi\Z)$ and use radians for angular
distance. For a finite measure $\mu$, put
\[
 \widehat\mu(k)=\int e^{ik\theta}\,d\mu(\theta).
\]
If $\mu$ is positive, then
$\widehat\mu(-k)=\overline{\widehat\mu(k)}$.

\begin{lemma}[Arc bound]\label{lem:arc}
Let $r\ge2$ be an integer. Suppose that a nonzero finite positive measure
$\mu$ is supported on $[-\alpha,\alpha]$, where $0\le\alpha<\pi$, and
\begin{equation}\label{eq:moments}
 \widehat\mu(k)=0\qquad(1\le k\le r-1).
\end{equation}
Then
\begin{equation}\label{eq:arc-bound}
 \alpha\ge\pi-\frac\pi r.
\end{equation}
If equality holds, $\mu$ assigns equal positive masses to the $r$ points
\begin{equation}\label{eq:nodes}
 -\alpha+\frac{2\pi j}{r},\qquad 0\le j\le r-1,
\end{equation}
and has no other support.
\end{lemma}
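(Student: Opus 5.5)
The plan is to test $\mu$ against a single real trigonometric polynomial $P$ of degree at most $r-1$. By \eqref{eq:moments} and $\widehat\mu(-k)=\overline{\widehat\mu(k)}$, every such $P$ satisfies
\[
\int P\,d\mu=\widehat P(0)\,\mu(\T),
\]
where $\widehat P(0)$ is the constant term of $P$. So it suffices to build, whenever $\alpha<\pi-\pi/r$, a polynomial $P$ with $\widehat P(0)>0$ and $P\le0$ on $[-\alpha,\alpha]$. That gives $0<\widehat P(0)\mu(\T)=\int P\,d\mu\le0$, a contradiction.

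It is convenient to centre at the gap and write $\varphi=\theta-\pi$. Then the support condition reads $|\varphi|\ge\beta:=\pi-\alpha$, and we must rule out $\beta>\pi/r$. I would take the Fej\'er extremal polynomial of degree $r-2$ built from the sine vector $s_j=\sin(j\pi/r)$, $1\le j\le r-1$, which is the eigenvector of the tridiagonal $(r-1)\times(r-1)$ matrix with ones on the off-diagonals:
\[
p(\varphi)=\Bigl|\sum_{j=1}^{r-1}\sin\tfrac{j\pi}{r}\,e^{ij\varphi}\Bigr|^2\ge0 .
\]
Its constant term is $c_0=\sum s_j^2$. Its first cosine coefficient is $c_1=\sum s_js_{j+1}$, and the eigenvector relation gives $c_1=\cos(\pi/r)\,c_0$. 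Now set
\[
P(\theta)=p(\varphi)\bigl(\cos\varphi-\cos\beta\bigr).
\]
This has degree $r-1$, and its constant term is $c_1-c_0\cos\beta=c_0(\cos\tfrac\pi r-\cos\beta)$, which is positive when $\beta>\pi/r$. On the support $|\varphi|\ge\beta$ we have $\cos\varphi\le\cos\beta$, so $P\le0$ there. This is the desired contradiction and proves \eqref{eq:arc-bound}.

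For the equality case $\beta=\pi/r$, the same $P$ has constant term $0$ and $P\le0$ on $\supp\mu$. Hence $\int P\,d\mu=0$ forces $\mu$ to live on the zero set of $P$ inside $\{|\varphi|\ge\pi/r\}$. That zero set consists of $\varphi=\pm\pi/r$ together with the zeros of $p$. Here I would use the closed form: summing the geometric series shows that $\sum_j\sin(j\pi/r)z^j$ is a constant multiple of $z(1+z^r)/(1-2z\cos(\pi/r)+z^2)$. Its zeros on the unit circle are therefore the roots of $z^r=-1$ other than $e^{\pm i\pi/r}$, that is, $\varphi=(2k+1)\pi/r$. Together with the endpoints this gives exactly the $r$ points $\varphi=(2k+1)\pi/r$, which are the nodes \eqref{eq:nodes} after translating back to $\theta$.

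Finally, write $\mu=\sum_j m_j\delta_{\theta_j}$ on these $r$ equally spaced nodes. The conditions $\widehat\mu(k)=0$ for $1\le k\le r-1$ say that the discrete Fourier transform of $(m_j)$ vanishes at all nonzero frequencies mod $r$. Hence all $m_j$ are equal, and they are positive because $\mu\ne0$.

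The main obstacle I expect is the verification of the extremal polynomial. This means checking the relation $c_1=\cos(\pi/r)c_0$ and pinning down the zero set of $p$ exactly. Both rest on the sine-vector identity $s_{j-1}+s_{j+1}=2\cos(\pi/r)s_j$ and on the geometric-sum closed form, and I would verify those carefully first.
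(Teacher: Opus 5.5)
Your proposal is correct and is essentially the paper's proof. After the substitution $\varphi=\theta-\pi$, your $p(\varphi)$ is exactly $|Q(e^{i\theta})|^2$ for the paper's sine vector $v_j=(-1)^j\sin((j+1)\pi/r)$, and your $P$ is $(\cos\alpha-\cos\theta)|Q(e^{i\theta})|^2$. The recurrence giving $c_1=\cos(\pi/r)c_0$, the geometric-sum closed form for the zero set, and the discrete Fourier inversion all match the paper's, with only cosmetic differences (centering at the gap removes the signs $(-1)^j$, and you argue by contradiction rather than via the direct inequality $-\cos(\pi/r)S\ge\cos\alpha\,S$).
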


\begin{proof}
Normalize $\mu$ to be a probability measure, and write
$\E f=\int f\,d\mu$. Set
\begin{equation}\label{eq:sine-vector}
 t=\frac\pi r,\qquad
 v_j=(-1)^j\sin((j+1)t)\quad(0\le j\le r-2),
\end{equation}
and define
\begin{equation}\label{eq:Q}
 Q(z)=\sum_{j=0}^{r-2}v_jz^j,\qquad
 S=\sum_{j=0}^{r-2}v_j^2=\frac r2.
\end{equation}
The last equality is the elementary sum
$\sum_{j=1}^{r-1}\sin^2(j\pi/r)=r/2$.
The vanished moments give
\begin{align}
 \E|Q(e^{i\theta})|^2&=S,\label{eq:norm}\\
 \E\!\left[\cos\theta\,|Q(e^{i\theta})|^2\right]
 &=\sum_{j=0}^{r-3}v_jv_{j+1}
 =-\cos(t)S.\label{eq:cosine}
\end{align}
Indeed, the trigonometric polynomials in these two expectations have
degrees at most $r-2$ and $r-1$, respectively, so only their constant
Fourier coefficients remain. The final identity follows by multiplying
\[
 v_{j-1}+v_{j+1}=-2\cos(t)v_j\quad(0\le j\le r-2),
 \qquad v_{-1}=v_{r-1}=0,
\]
by $v_j$ and summing. This also covers $r=2$, when the adjacent sum
is empty.

On the support of $\mu$, $\cos\theta\ge\cos\alpha$. Hence
\[
 -\cos(t)S\ge\cos\alpha\,S.
\]
Since $S>0$ and cosine is decreasing on $[0,\pi]$, this proves
\eqref{eq:arc-bound}.

If $\alpha=\pi-t$, equality in this integral inequality forces the
support into the zero set of
\begin{equation}\label{eq:H}
 (\cos\theta+\cos t)|Q(e^{i\theta})|^2
 \quad\text{on }[-\alpha,\alpha].
\end{equation}
The recurrence above gives the polynomial identity
\begin{equation}\label{eq:Qidentity}
 (1+2\cos(t)z+z^2)Q(z)=\sin(t)\bigl(1+(-z)^r\bigr).
\end{equation}
The polynomial $1+(-z)^r$ has the $r$ distinct roots
$e^{i(-\alpha+2\pi j/r)}$. The two roots of the quadratic on the left
are $e^{\pm i\alpha}$, and the remaining $r-2$ roots are the roots of
$Q$. Thus the zeros in \eqref{eq:H} are exactly \eqref{eq:nodes}.
If $w_j$ denotes the mass at the $j$th node, allowing initially $w_j=0$,
then
\[
 \sum_{j=0}^{r-1}w_je^{2\pi ikj/r}=0\quad(1\le k<r),
 \qquad \sum_{j=0}^{r-1}w_j=1.
\]
Discrete Fourier inversion gives $w_j=1/r$ for all $j$.
\end{proof}

\begin{remark}\label{rem:quadrature}
For a probability measure satisfying \eqref{eq:moments}, integration
agrees with normalized Haar integration on every trigonometric
polynomial of degree at most $r-1$. In particular, the two integrals
in \eqref{eq:norm}--\eqref{eq:cosine} equal the corresponding Haar integrals.
In matrix terms, the compression of multiplication by $\cos\theta$
to the orthonormal family $1,z,\ldots,z^{r-2}$ is the tridiagonal matrix
with $1/2$ on its adjacent diagonals and zero elsewhere. Its least
eigenvalue is $-\cos(\pi/r)$, with eigenvector \eqref{eq:sine-vector}.
\end{remark}

\begin{corollary}[Circular gaps]\label{cor:circle-gap}
If a nonzero finite positive measure on $\T$ satisfies
\eqref{eq:moments}, every open arc disjoint from its support has length
at most $2\pi/r$. If such an arc has length $2\pi/r$, the measure is a
positive scalar multiple of the uniform measure on a rotated regular
$r$-gon.
\end{corollary}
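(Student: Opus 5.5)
The plan is to reduce Corollary~\ref{cor:circle-gap} to Lemma~\ref{lem:arc} by a rotation. The vanishing of the moments \eqref{eq:moments} is invariant under rotation, since rotating $\mu$ by $\phi$ multiplies $\widehat\mu(k)$ by $e^{ik\phi}$. Let $I$ be an open arc of length $\ell$ disjoint from $\supp\mu$. If $\ell\ge 2\pi$, then $\mu=0$, a contradiction, so $\ell<2\pi$. Rotate so that $I$ is centred at $\pi$, that is, $I=(\pi-\ell/2,\pi+\ell/2)$ modulo $2\pi$. Then the rotated measure $\nu$ is supported on the closed arc $[-\alpha,\alpha]$ with $\alpha=\pi-\ell/2$, which satisfies $0<\alpha\le\pi$.

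If $\alpha<\pi$, Lemma~\ref{lem:arc} applies to $\nu$ and gives $\pi-\ell/2\ge\pi-\pi/r$, that is, $\ell\le 2\pi/r$. The case $\alpha=\pi$ corresponds to $\ell=0$, which is trivially at most $2\pi/r$. This proves the gap bound.

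For the equality case, suppose $\ell=2\pi/r$. Then $\alpha=\pi-\pi/r<\pi$, so equality holds in \eqref{eq:arc-bound}. The equality clause of Lemma~\ref{lem:arc} then says that $\nu$ places equal positive masses on the $r$ points \eqref{eq:nodes} and nothing else. These points are spaced by $2\pi/r$, so they form a regular $r$-gon, and $\nu$ is a positive multiple of the uniform measure on it. Undoing the rotation, $\mu$ is a positive multiple of the uniform measure on a rotated regular $r$-gon.

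There is no real obstacle in this argument. The only points needing care are the bookkeeping of modulo-$2\pi$ arcs and the degenerate cases $\ell\ge 2\pi$ and $\ell=0$, both handled above. We should also note that the lemma needs $r\ge2$, which is assumed in \eqref{eq:moments}.
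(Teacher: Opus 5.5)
Your proof is correct and matches the paper's argument: rotate so the complementary closed arc is centred at $0$, note that rotation preserves the vanishing moments, and apply Lemma~\ref{lem:arc} together with its equality clause. One minor slip is that an open arc of length exactly $2\pi$ is $\T$ minus a point, so it does not force $\mu=0$; that case is still excluded because a point mass has $\widehat\mu(1)\neq0$, or equivalently because Lemma~\ref{lem:arc} already applies with $\alpha=0$.
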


\begin{proof}
Rotate the complementary closed arc to have midpoint zero and apply
Lemma~\ref{lem:arc}. Rotating a measure preserves the vanishing of each
Fourier moment.
\end{proof}

\section{Cyclotomic divisibility and the first nonzero moment}
\label{sec:cyclotomic}

\begin{proof}[Proof of Theorem~\ref{thm:main}]
Removing the initial monomial factor preserves divisibility by
$\Phi_N$ and the support diameter. Write the resulting polynomial as
$F_0(X)=\sum_{a=0}^{D}c_aX^a$, where $c_a\ge0$ and $c_0c_D>0$.
If $D\ge N$, the desired inequality is strict. Otherwise set
\begin{equation}\label{eq:polynomial-measure}
 \alpha=\frac{\pi D}{N},\qquad
 \theta_a=\frac{2\pi a}{N}-\alpha,\qquad
 \mu=\frac1{F_0(1)}\sum_{a=0}^Dc_a\delta_{\theta_a}.
\end{equation}
For $1\le k<p$, the integer $k$ is coprime to $N$, so
\[
 \widehat\mu(k)
 =\frac{e^{-ik\alpha}}{F_0(1)}F_0(e^{2\pi ik/N})=0.
\]
Lemma~\ref{lem:arc}, applied with $r=p$, gives
$\pi D/N\ge\pi-\pi/p$, proving \eqref{eq:main}.

In the equality case, the points \eqref{eq:nodes} correspond exactly to
the exponents $0,N/p,\ldots,(p-1)N/p$, and their coefficients are equal.
Restoring the initial monomial gives \eqref{eq:main-equality}.
Conversely, $G_{N,p}$ vanishes at every primitive $N$th root, since the
ratio in its geometric sum has order $p$. It is therefore divisible by
$\Phi_N$ and has the asserted diameter. Minimum degree requires $b=0$,
and monicity then requires $c=1$.
\end{proof}

\begin{proposition}[A longer interval of zero moments]\label{prop:block}
Let $N\ge2$, $2\le r\le N$, and let $F\ne0$ have nonnegative real
coefficients. If
\begin{equation}\label{eq:block}
 F(e^{2\pi ik/N})=0\qquad(1\le k<r),
\end{equation}
then $D(F)\ge N(1-1/r)$. Equality is possible only if $r\mid N$, and
then it holds exactly for
\begin{equation}\label{eq:block-equality}
 F(X)=cX^b\sum_{j=0}^{r-1}X^{jN/r},\qquad c>0,\quad b\in\NN.
\end{equation}
\end{proposition}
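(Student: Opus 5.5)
The plan is to repeat the proof of Theorem~\ref{thm:main}, with hypothesis \eqref{eq:block} replacing the coprimality argument, and to apply Lemma~\ref{lem:arc} with $r$ in place of $p$.

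First strip the initial monomial to get $F_0(X)=\sum_{a=0}^D c_aX^a$ with $c_0c_D>0$. If $D\ge N$, then $D>N(1-1/r)$ strictly, and there is nothing more to prove. Otherwise $D<N$. Define $\alpha=\pi D/N<\pi$, nodes $\theta_a=2\pi a/N-\alpha\in[-\alpha,\alpha]$ and the probability measure $\mu$ exactly as in \eqref{eq:polynomial-measure}. Then for $1\le k\le r-1$ we have
\[
 \widehat\mu(k)=\frac{e^{-ik\alpha}}{F_0(1)}F_0(e^{2\pi ik/N})=0
\]
directly by \eqref{eq:block}, with no arithmetic input about $k$ needed. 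Lemma~\ref{lem:arc} gives $\pi D/N\ge\pi-\pi/r$, that is, $D\ge N(1-1/r)$.

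For the equality case, suppose $D=N(1-1/r)$. Then $\alpha=\pi-\pi/r$, and Lemma~\ref{lem:arc} forces $\mu$ to be uniform on the $r$ nodes $-\alpha+2\pi j/r$. These are all realized by exponents $a$ with $2\pi a/N-\alpha=-\alpha+2\pi j/r$, that is, $a=jN/r$. Since the support of $\mu$ consists of points $\theta_a$ with $a$ an integer and $j=1$ must occur, $N/r$ must be an integer, so $r\mid N$. Moreover $c_a$ is constant on $a\in\{0,N/r,\dots,(r-1)N/r\}$ and vanishes elsewhere, and restoring $X^b$ gives \eqref{eq:block-equality}.

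For the converse, when $r\mid N$ the polynomial in \eqref{eq:block-equality} evaluated at $e^{2\pi ik/N}$ is $c\,e^{2\pi ikb/N}$ times a geometric sum with ratio $e^{2\pi ik/r}\ne1$ for $1\le k<r$. That sum vanishes, and the diameter is $(r-1)N/r$.

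The only subtle point is the divisibility $r\mid N$. It follows because the equality case of the lemma places mass at the node $j=1$, whose exponent $N/r$ must be an integer. The rest is a verbatim transfer of the earlier proof.
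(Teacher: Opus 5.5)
Your proposal is correct and is essentially the paper's own proof: discard the case $D\ge N$, form the coefficient measure \eqref{eq:polynomial-measure}, apply Lemma~\ref{lem:arc} with the given $r$, and read off $r\mid N$ (via the node $j=1$ forcing $N/r\in\Z$) and the equal coefficients from the equality case, with the converse by a geometric sum. The one step you leave implicit is trivial: removing the factor $X^b$ preserves the zeros in \eqref{eq:block}, because $e^{2\pi ik/N}\ne0$.
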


\begin{proof}
If $D(F)\ge N$, the inequality is strict. Otherwise remove the initial
monomial factor and use \eqref{eq:polynomial-measure} and
Lemma~\ref{lem:arc} with the given
$r$. At equality the exponents, after subtracting the smallest one,
are $jN/r$ for $0\le j<r$. In particular, $N/r$ must be an integer.
The converse follows by summing a geometric progression.
\end{proof}

For rational coefficients, this strengthening can be read directly from
the cyclotomic divisor set. Put $\zeta_N=e^{2\pi i/N}$ and define
\begin{equation}\label{eq:rho}
 \rho_N(F)=\min\{k\ge1:F(\zeta_N^k)\ne0\}.
\end{equation}
This minimum is at most $N$, because $F(1)>0$.

\begin{corollary}\label{cor:rho}
Let $N\ge2$ and let $F\in\mathbb Q[X]\setminus\{0\}$ have nonnegative coefficients.
Then
\begin{equation}\label{eq:rho-divisor}
 \rho_N(F)=\min\{d:d\mid N,\ \Phi_{N/d}\nmid F\},
\end{equation}
and
\begin{equation}\label{eq:rho-bound}
 D(F)\ge N-\frac{N}{\rho_N(F)}.
\end{equation}
Equality holds precisely for the polynomials in
\eqref{eq:block-equality} with $r=\rho_N(F)$; when $r=1$ this means
a positive monomial.
\end{corollary}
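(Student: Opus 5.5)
The plan has two parts. First I prove the divisor formula \eqref{eq:rho-divisor}. Then I reduce \eqref{eq:rho-bound} and its equality case to Proposition~\ref{prop:block}.

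For \eqref{eq:rho-divisor}, the key fact is that $F$ has rational coefficients. For $1\le k\le N$, write $d=\gcd(k,N)$. Then $\zeta_N^k$ is a primitive $(N/d)$th root of unity. Since $\Phi_{N/d}$ is irreducible over $\mathbb Q$, we get $F(\zeta_N^k)=0$ if and only if $\Phi_{N/d}\mid F$. So the condition $F(\zeta_N^k)\ne0$ depends only on $\gcd(k,N)$.

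Let $d_0$ be the right-hand side of \eqref{eq:rho-divisor}. It is well defined because $d=N$ gives $\Phi_1=X-1$, and $X-1\nmid F$ since $F(1)>0$.

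\begin{itemize}
\item Taking $k=d_0$, which satisfies $\gcd(d_0,N)=d_0$, gives $F(\zeta_N^{d_0})\ne0$. Hence $\rho_N(F)\le d_0$.
\item Conversely, if $F(\zeta_N^k)\ne0$, then $d=\gcd(k,N)$ satisfies $\Phi_{N/d}\nmid F$. Thus $d\ge d_0$, and $k\ge d\ge d_0$.
\end{itemize}

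These give equality. The only subtlety is recording that irreducibility over $\mathbb Q$ is what transfers nonvanishing between conjugate roots. This is where rationality is used.

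For the bound, set $r=\rho_N(F)$. If $r=1$, the bound reads $D(F)\ge0$, which is trivial. Equality $D(F)=0$ means $F$ is a positive monomial, which is the $r=1$ instance of \eqref{eq:block-equality}.

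If $r\ge2$, then $2\le r\le N$, and the definition of $\rho_N$ gives \eqref{eq:block} with this $r$. Proposition~\ref{prop:block} then yields $D(F)\ge N(1-1/r)$. Moreover, by \eqref{eq:rho-divisor}, $r$ is a divisor of $N$, so the requirement $r\mid N$ in Proposition~\ref{prop:block} is automatic. That proposition also shows equality holds exactly for \eqref{eq:block-equality}.

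One must still check that the equality polynomials in \eqref{eq:block-equality} really have $\rho_N=r$, so that "precisely" is consistent. Let $F=cX^b\sum_{j<r}X^{jN/r}$. At $X=\zeta_N^k$, the sum is a geometric sum with ratio $e^{2\pi ik/r}$.

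\begin{itemize}
\item For $1\le k<r$ the ratio is not $1$, so the sum vanishes.
\item For $k=r$ the ratio is $1$, so the sum equals $r\ne0$.
\end{itemize}

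Hence $\rho_N(F)=r$. These polynomials have rational coefficients only when $c\in\mathbb Q$, which fits the hypothesis $F\in\mathbb Q[X]$.

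I expect no serious obstacle. The main step is the Galois-conjugacy argument for \eqref{eq:rho-divisor}, together with the bookkeeping that $\rho_N(F)$ divides $N$, which makes the equality case of Proposition~\ref{prop:block} apply unconditionally.
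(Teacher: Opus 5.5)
Your proposal is correct and takes the same route as the paper. The Galois-conjugacy argument (irreducibility of $\Phi_{N/d}$ over $\mathbb Q$) shows that $\rho_N(F)$ divides $N$ and gives \eqref{eq:rho-divisor}; Proposition~\ref{prop:block} then handles $r\ge2$, and $r=1$ is trivial. Your explicit check that the polynomials in \eqref{eq:block-equality} really have $\rho_N=r$ is bookkeeping that the paper leaves implicit.
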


\begin{proof}
Let $k=\rho_N(F)$ and $d=\gcd(k,N)$. The numbers $\zeta_N^k$ and
$\zeta_N^d$ are primitive roots of the same order $N/d$.
Rationality and irreducibility of $\Phi_{N/d}$ imply that $F$ vanishes
at one if and only if it vanishes at the other. Thus
$F(\zeta_N^d)\ne0$, and minimality gives $k=d$.
This also proves \eqref{eq:rho-divisor}, with $\Phi_1(X)=X-1$.
For $r\ge2$, apply Proposition~\ref{prop:block}; for $r=1$, the
assertion is $D(F)\ge0$ with its evident equality case.
\end{proof}

\begin{remark}\label{rem:real}
The rationality assumption in \eqref{eq:rho-divisor} is essential for
the conjugacy argument. For real coefficients, a zero at one primitive
root need not imply divisibility by its cyclotomic polynomial. For
example, with $\varphi=(1+\sqrt5)/2$, the polynomial
$\varphi+X^2+X^3$ vanishes at $e^{2\pi i/5}$ but has diameter $3<4$.
It is not divisible by $\Phi_5$. Theorem~\ref{thm:main} assumes actual
cyclotomic divisibility and therefore applies to real coefficients.
\end{remark}

\section{Diameter bounds for integer tilings}\label{sec:tilings}

We first prove Corollary~\ref{cor:CM} and identify the role of its
period. We then use all the prime-power data in (T2) to obtain a
stronger bound.

\begin{proof}[Proof of Corollary~\ref{cor:CM}]
Write $M=\prod_{i=1}^sp_i^{n_i}$. Each maximal prime power $p_i^{n_i}$
belongs to $S_A$, so (T2) gives $\Phi_M\mid A(X)$.
Since $\Phi_{q^e}(1)=q$, (T1) shows that $M$ and $|A|$ have exactly
the same prime divisors. Theorem~\ref{thm:main} proves \eqref{eq:CM}.

At equality, the zero--one coefficients give
$A=b+\{0,M/p,\ldots,(p-1)M/p\}$, so $|A|=p$.
Condition (T1) now implies that $S_A$ consists of a single prime power
$p^n$. Thus $M=p^n$ and \eqref{eq:CM-equality} follows.
Conversely, the normalized mask of the stated set is $\Phi_{p^n}$,
which satisfies (T1) and (T2) and attains equality.
\end{proof}

In particular, if $M$ has at least two distinct prime divisors, the
integer-valued bound is strict:
\begin{equation}\label{eq:strict}
 \diam(A)\ge\frac{p-1}{p}M+1.
\end{equation}

\subsection{The canonical period}

Let $\mathcal P(A)$ denote the smallest positive period among all
tilings of $\Z$ by translates of $A$. Under (T1) and (T2),
\begin{equation}\label{eq:minperiod}
 \mathcal P(A)=\lcm(S_A)=M.
\end{equation}
This follows from the construction and the period observation of
Coven and Meyerowitz \cite[Theorem~A and remarks after Lemma~2.1]{CM}.
For completeness, suppose $A\oplus B=\Z/K\Z$ is a tiling modulo $K$,
with finite representatives $B$. Then $A(1)B(1)=K$, and every
$\Phi_u$ with $u\mid K$, $u>1$, divides $A(X)B(X)$.
The product of the prime-power factors among these has value $K$ at
$1$. If some $q^e\in S_A$ did not divide $K$, including the additional
distinct factor $\Phi_{q^e}$ would force $qK$ to divide $A(1)B(1)=K$,
a contradiction. Hence $M\mid K$. The construction supplies a tiling
with period $M$, proving \eqref{eq:minperiod}.
Consequently Corollary~\ref{cor:CM} yields
\[
 \mathcal P(A)\le\frac{p}{p-1}\diam(A).
\]

\begin{example}[The need for mixed divisibility]\label{ex:no-T2}
The zero--one polynomial
\[
 A(X)=(1+X^3+X^6)(1+X^5+X^{10}+X^{15}+X^{20})
      =\Phi_9(X)\Phi_{25}(X)
\]
has $S_A=\{9,25\}$, $|A|=15$, and $M=225$, but $\diam(A)=26<150$.
It satisfies (T1) and fails (T2). This is a concrete instance of the
unrestricted counterexamples discussed in \cite[Section~4]{LZ}; it
explains why the definition of $M$ alone does not imply the diameter
bound.
\end{example}

\subsection{A refinement from the prime-power divisor sets}

For each prime $q\mid M$, write $n_q=v_q(M)$ and
\[
 E_q=\{e:q^e\in S_A\},\qquad
 \ell_q=\max\{h:\{n_q-h+1,\ldots,n_q\}\subseteq E_q\}.
\]
Thus $\ell_q$ is the number of consecutive supported levels at the top,
and $1\le\ell_q\le n_q$. Define
\begin{equation}\label{eq:L}
 L_A=
 \begin{cases}
 M,&\ell_q=n_q\text{ for every }q\mid M,\\
 \displaystyle\min_{\substack{q\mid M\\\ell_q<n_q}}q^{\ell_q},
 &\text{otherwise}.
 \end{cases}
\end{equation}
In both cases $L_A\mid M$.

\begin{theorem}\label{thm:top}
Let $A\subset\Z$ be finite and nonempty, suppose $S_A\ne\varnothing$, and assume
(T2). With $M=\lcm(S_A)$ and $L_A$ as in \eqref{eq:L},
\begin{equation}\label{eq:top}
 \diam(A)\ge M-\frac{M}{L_A}.
\end{equation}
If every $\ell_q=n_q$, equality holds precisely for translates of
$\{0,1,\ldots,M-1\}$. Otherwise equality holds precisely when
\begin{equation}\label{eq:top-equality}
 M=q^n,\qquad
 A=b+q^{n-\ell}\{0,1,\ldots,q^\ell-1\},
 \qquad 1\le\ell<n,
\end{equation}
in which case $L_A=q^\ell$.
\end{theorem}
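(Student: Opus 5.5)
The plan is to reduce Theorem~\ref{thm:top} to Corollary~\ref{cor:rho}, applied with $N=M$ and $F=A(X)$; this polynomial has rational nonnegative coefficients. By \eqref{eq:rho-divisor} it suffices to show $\rho_M(A)\ge L_A$, that is, $\Phi_{M/d}\mid A(X)$ for every divisor $d\mid M$ with $d<L_A$. Then \eqref{eq:rho-bound} gives $\diam(A)\ge M-M/\rho_M(A)\ge M-M/L_A$.

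The key step is this divisibility. Fix $d\mid M$ with $d<L_A\le M$, so $M/d>1$, and factor $M/d=\prod_{q\mid M}q^{n_q-v_q(d)}$. I claim each nontrivial factor $q^{n_q-v_q(d)}$ lies in $S_A$.
\begin{itemize}
\item If $\ell_q=n_q$, then every level $1,\dots,n_q$ lies in $E_q$, and the claim is immediate; factors with $v_q(d)=n_q$ are simply $1$ and are dropped.
\item If $\ell_q<n_q$, then $q^{v_q(d)}\le d<L_A\le q^{\ell_q}$, so $v_q(d)<\ell_q$. Hence the exponent $n_q-v_q(d)$ lies in $\{n_q-\ell_q+1,\dots,n_q\}\subseteq E_q$.
\end{itemize}
Thus $M/d$ is a product of elements of $S_A$ that are powers of distinct primes, and (T2) yields $\Phi_{M/d}\mid A(X)$. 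This proves \eqref{eq:top}.

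For the equality case, equality in \eqref{eq:top} forces $\rho_M(A)=L_A$ together with equality in \eqref{eq:rho-bound}. Since the coefficients are zero--one, Corollary~\ref{cor:rho} then gives $A=b+(M/L_A)\{0,1,\dots,L_A-1\}$.
\begin{itemize}
\item If every $\ell_q=n_q$, then $L_A=M$, and $A$ is a translate of $\{0,\dots,M-1\}$.
\item Otherwise $L_A=q^\ell$ with $\ell=\ell_q<n_q$. Put $m=M/q^\ell$, so the mask is $1+Y+\cdots+Y^{q^\ell-1}$ with $Y=X^m$. I will compute $S_A$ directly: $\Phi_{r^e}$ divides this mask exactly when $e^{2\pi i m/r^e}$ has order a nontrivial power of $q$, i.e.\ $r=q$ and $v_q(m)<e\le v_q(m)+\ell$. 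Hence $\lcm(S_A)=q^{v_q(m)+\ell}$. Equating this with $M=mq^\ell$ forces $m$ to be a power of $q$, so $M=q^n$ and $m=q^{n-\ell}$. Finally, $\ell<n$ holds because $m>1$; if $m=1$ we would be in the full case.
\end{itemize}

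For the converse, the interval $\{0,\dots,M-1\}$ has mask divisible by every $\Phi_u$ with $u\mid M$, $u>1$. Hence $S_A$ consists of all prime powers dividing $M$, (T2) holds, $L_A=M$, and the diameter $M-1$ attains equality. For the sets in \eqref{eq:top-equality}, the computation above gives $S_A=\{q^{n-\ell+1},\dots,q^n\}$. Then (T2) is vacuous beyond single prime powers, $\ell_q=\ell$, $L_A=q^\ell$, and $\diam(A)=q^{n-\ell}(q^\ell-1)=M-M/L_A$.

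The main obstacle is bookkeeping in the equality case, namely pinning down $S_A$ for a dilated interval and checking that the top-level count $\ell_q$ comes out exactly $\ell$. The inequality itself is a short valuation argument once Corollary~\ref{cor:rho} is invoked.
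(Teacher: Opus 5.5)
Your proposal is correct and follows essentially the paper's argument: the same valuation check that the order $M/d$ (for $d<L_A$) is a product of elements of $S_A$ at distinct primes, followed by the same computation of the prime-power cyclotomic divisors of the dilated geometric sum in the equality case. The only difference is cosmetic: you go through Corollary~\ref{cor:rho} and check only divisors $d\mid M$ with $d<L_A$. The paper instead applies Proposition~\ref{prop:block} directly to all $1\le k<L_A$, working with the order $M/\gcd(k,M)$.
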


\begin{proof}
Fix $1\le k<L_A$. The order of $e^{2\pi ik/M}$ is
\[
 \frac{M}{\gcd(k,M)}
 =\prod_{q\mid M}q^{\max\{n_q-v_q(k),0\}}.
\]
If $\ell_q=n_q$, every positive exponent in this expression belongs to
$E_q$. If $\ell_q<n_q$, then $k<L_A\le q^{\ell_q}$, so
$v_q(k)<\ell_q$ and
$n_q-v_q(k)\in\{n_q-\ell_q+1,\ldots,n_q\}\subseteq E_q$.
Since $k<M$, the order is greater than one. Condition (T2) therefore
gives $A(e^{2\pi ik/M})=0$. Proposition~\ref{prop:block} proves
\eqref{eq:top} and shows that equality requires
\begin{equation}\label{eq:Lcycle}
 A(X)=X^b\sum_{j=0}^{L_A-1}X^{jM/L_A},
\end{equation}
after a nonnegative translation.

If $L_A=M$, this is a full interval, which has the stated divisor set
and attains equality. In the other case $L_A=q^\ell$ for some prime
$q$, with $\ell<n_q$. The geometric sum in \eqref{eq:Lcycle} is
\[
 \frac{X^M-1}{X^{M/q^\ell}-1}.
\]
Its prime-power cyclotomic divisors are exactly
$q^{n_q-\ell+1},\ldots,q^{n_q}$: for every other prime, the valuations
in $M$ and $M/q^\ell$ coincide. The definition $M=\lcm(S_A)$ consequently
forces $M=q^{n_q}$. This proves necessity in \eqref{eq:top-equality}.
Conversely, the sets in \eqref{eq:top-equality} have exactly that
consecutive divisor set, satisfy (T2), and attain the bound.
\end{proof}

If some $\ell_q<n_q$, choose $q$ with $L_A=q^{\ell_q}$.
The order $M/L_A$ contains the first missing exponent
$n_q-\ell_q$ at that prime. Thus this order is outside the list of
orders supplied by the products in (T2). Additional cyclotomic
divisors of $A(X)$ may still give a longer interval of zero moments;
Corollary~\ref{cor:rho} captures those additional zeros.

\begin{example}\label{ex:450}
Use the notation $\br mY=1+Y+\cdots+Y^{m-1}$, and consider
\[
 A(X)=\br2{X^{225}}\,\br3{X^{75}}\,\br5{X^5}.
\]
Its $30$ exponents are $225u+75v+5w$, with $0\le u<2$, $0\le v<3$,
and $0\le w<5$. They are distinct: after division by $5$, the residue
modulo $15$ determines $w$, and then $3u+v$ determines $u,v$.
The three factors have respective cyclotomic divisor sets
\[
 \{2,6,10,18,30,50,90,150,450\},\qquad
 \{9,45,225\},\qquad \{25\}.
\]
Hence $S_A=\{2,9,25\}$ and both (T1) and (T2) hold.
Here $M=450$, $L_A=3$, and $\diam(A)=395$.
Theorem~\ref{thm:top} gives the lower bound $300$, compared with $225$
from Corollary~\ref{cor:CM}. The product of all factors required by
(T2) has degree
\[
 (1+\varphi(2))(1+\varphi(9))(1+\varphi(25))-1=293,
\]
so the refinement also exceeds the direct degree estimate.
Here $\varphi$ denotes Euler's totient function. The additional mixed
divisors in this example give $\rho_{450}(A)=6$, so
Corollary~\ref{cor:rho} improves the bound further to $375$.
\end{example}

\begin{example}\label{ex:36}
For $A(X)=\br6{X^6}$ one has $S_A=\{4,9\}$ and $M=36$.
Theorem~\ref{thm:top} gives $L_A=2$, but direct geometric summation gives
$\rho_{36}(A)=6$. Thus Corollary~\ref{cor:rho} gives the exact bound
$D(A)=36-36/6=30$. The additional divisors $\Phi_{12}$ and $\Phi_{18}$
account for the longer interval of vanishing moments.
\end{example}

\section{Quantitative rigidity near equality}\label{sec:stability}

The sine polynomial also controls how far a nearly extremal measure
can lie from the regular polygon in Lemma~\ref{lem:arc}.
For $V\subset\T$, let $d_{\T}(\theta,V)$ be the shortest angular
distance from $\theta$ to $V$.

\begin{proposition}\label{prop:stability}
Let $r\ge2$, and let $\mu$ be a probability measure satisfying
$\widehat\mu(k)=0$ for $1\le k<r$. Suppose that
$\supp\mu\subset[-\alpha,\alpha]$, where
$\alpha_*=\pi-\pi/r\le\alpha\le\pi$. Set
\[
 V_r=\left\{-\alpha_*+\frac{2\pi j}{r}:0\le j<r\right\}\pmod{2\pi}.
\]
Then
\begin{equation}\label{eq:stability}
 \int d_{\T}(\theta,V_r)^2\,d\mu(\theta)
 \le\frac{\pi^2[-\cos(\pi/r)-\cos\alpha]}
 {2r[1-\cos(\pi/r)]}
 \le\pi(\alpha-\alpha_*).
\end{equation}
In particular, as $\alpha$ decreases to $\alpha_*$, such measures
converge weakly to the uniform probability measure on $V_r$.
\end{proposition}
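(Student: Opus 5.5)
The plan is to reuse the extremal sine polynomial $Q$ from the proof of Lemma~\ref{lem:arc}, with $t=\pi/r$, and to rewrite the function $H(\theta)=(\cos\theta+\cos t)|Q(e^{i\theta})|^2$ in closed form using \eqref{eq:Qidentity}. For $z=e^{i\theta}$ we have $|1+2\cos(t)z+z^2|=2|\cos\theta+\cos t|$, and $|1+(-z)^r|^2=4\cos^2\!\bigl(r(\theta+\pi)/2\bigr)$. Hence, wherever $\cos\theta\ne-\cos t$,
\[
 H(\theta)=\frac{\sin^2 t\,\cos^2\!\bigl(r(\theta+\pi)/2\bigr)}{\cos\theta+\cos t}.
\]
The nodes of $V_r$ are exactly the zeros of $\cos(r(\theta+\pi)/2)$, and they are spaced $2\pi/r$ apart. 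So $\cos^2(r(\theta+\pi)/2)=\sin^2\!\bigl(r\,d_{\T}(\theta,V_r)/2\bigr)$ with $r\,d_{\T}/2\le\pi/2$. By $\sin x\ge 2x/\pi$ this is at least $r^2d_{\T}(\theta,V_r)^2/\pi^2$.

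The identities \eqref{eq:norm}--\eqref{eq:cosine} still hold, since they use only the vanishing moments. They give $\E H=0$ and $\E|Q|^2=S=r/2$.

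Split $\E H=0$ into positive and negative parts. Where $\cos\theta+\cos t>0$, the closed form and $\cos\theta+\cos t\le 1+\cos t$ give
\[
 H\ge \frac{\sin^2t}{1+\cos t}\cdot\frac{r^2}{\pi^2}\,d_{\T}^2
 =\frac{(1-\cos t)r^2}{\pi^2}\,d_{\T}^2 .
\]
Where $H<0$, the support condition $\cos\theta\ge\cos\alpha$ gives $H^-\le(-\cos t-\cos\alpha)|Q|^2$, so $\E H^-\le(-\cos t-\cos\alpha)\,r/2$.

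Combining the two parts, the $d_{\T}^2$-integral over the positive region is at most
\[
 \frac{\pi^2(-\cos t-\cos\alpha)}{2r(1-\cos t)},
\]
which is exactly the first bound in \eqref{eq:stability}.

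The main obstacle is the contribution to $\int d_{\T}^2\,d\mu$ from the negative region $\alpha_*<|\theta|\le\alpha$, where the lower bound on $H$ is unavailable. I would try to prove a single pointwise inequality on all of $[-\alpha,\alpha]$:
\[
 \frac{(1-\cos t)r^2}{\pi^2}\,d_{\T}^2\le H+(-\cos t-\cos\alpha)|Q|^2 .
\]
On the negative region the right side equals $(\cos\theta-\cos\alpha)|Q|^2\ge0$. There $d_{\T}(\theta,V_r)=|\theta|-\alpha_*$, because the nearest nodes are $\pm\alpha_*$. Also $\sin^2 t\sin^2(rd/2)=(\cos\theta+\cos t)^2|Q|^2$. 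So the needed estimate reduces to comparing $(\cos\theta+\cos t)^2$ with $(1+\cos t)(\cos\theta-\cos\alpha)$ for small $|\theta|-\alpha_*$, up to the factor lost in $\sin x\ge2x/\pi$. If the constant does not close, I would first use the cruder bound $d_{\T}\le\alpha-\alpha_*$ there, together with $|Q|^2$-mass bounds, and check whether it suffices; failing that, I would adjust how the $|Q|^2$ term is split between the two regions.

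For the second inequality in \eqref{eq:stability}, note that cosine is convex on $[\pi/2,\pi]$ and that $\alpha_*\ge\pi/2$. Hence
\[
 -\cos t-\cos\alpha=\cos\alpha_*-\cos\alpha\le\sin t\,(\alpha-\alpha_*).
\]
Then
\[
 \frac{\pi^2\sin t}{2r(1-\cos t)}=\frac{\pi^2\cot(t/2)}{2r}\le\frac{\pi^2}{2r}\cdot\frac{2}{t}=\pi .
\]
Finally, for the weak convergence statement: as $\alpha\downarrow\alpha_*$ the mass concentrates near $V_r$. Any weak limit is therefore a probability measure supported on $V_r$ whose first $r-1$ moments vanish, since these are continuous functionals. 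Discrete Fourier inversion, exactly as at the end of the proof of Lemma~\ref{lem:arc}, then forces equal weights $1/r$, so the limit is the uniform probability measure on $V_r$.
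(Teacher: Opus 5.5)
Your closed form for $H$, the estimate on the region $\cos\theta+\cos t>0$, the second inequality, and the weak-convergence step are all fine. The first inequality in \eqref{eq:stability}, however, is not established, because the pointwise inequality you propose for the region $\alpha_*<|\theta|\le\alpha$ is false. Write $\delta=-\cos t-\cos\alpha$. At $\theta=\pm\alpha$ the right side is $H+\delta|Q|^2=(\cos\theta-\cos\alpha)|Q|^2=0$. The left side there is $\frac{(1-\cos t)r^2}{\pi^2}(\alpha-\alpha_*)^2>0$ as soon as $\alpha>\alpha_*$. Your reduced comparison fails at the same point, since $(\cos\alpha+\cos t)^2=\delta^2>0=(1+\cos t)(\cos\alpha-\cos\alpha)$. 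So the trouble is at the far end $|\theta|=\alpha$, not near $\alpha_*$. By continuity it persists on a neighbourhood of that endpoint, where $\mu$ is allowed to put mass. The fallback $d_{\T}\le\alpha-\alpha_*$ adds $(\alpha-\alpha_*)^2\mu(\{\alpha_*<|\theta|\le\alpha\})$ on top of a positive-region bound that already equals the target constant. It therefore cannot give \eqref{eq:stability}, although it does suffice for the weak-convergence claim.

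The missing idea is that the coefficient $1$ in front of $H$ is the wrong one. Since $\E H=0$, any multiple $\lambda H$ can be added for free, and the right choice is $\lambda=1-\delta/(1+\cos t)$. Set $x=\cos\theta+\cos t$. On the whole circle, without dividing by $x$, you have $x^2|Q|^2=\sin^2t\,\sin^2(r\,d_{\T}/2)$, hence $\frac{(1-\cos t)r^2}{\pi^2}d_{\T}^2\le x^2|Q|^2/(1+\cos t)$. On the entire supporting arc $-\delta\le x\le1+\cos t$, so $(x+\delta)(x-1-\cos t)\le0$. Equivalently, $x^2\le(1+\cos t-\delta)x+\delta(1+\cos t)$, which says exactly that $x^2|Q|^2/(1+\cos t)\le\lambda H+\delta|Q|^2$. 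Integrating against $\mu$ with $\E H=0$ and $\E|Q|^2=r/2$ gives the first bound in one step, with no splitting into regions. This is the paper's proof. It controls the second moment $\int x^2|Q|^2\,d\mu\le\delta(1+\cos t)S$ through this quadratic majorant, rather than the positive part of the first moment $\int x|Q|^2\,d\mu$.
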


\begin{proof}
Keep $t=\pi/r$, $Q$, and $S=r/2$ from
\eqref{eq:sine-vector}--\eqref{eq:Q}. Write
$c=\cos t$, $s=\sin t$, $\delta=-c-\cos\alpha\ge0$, and
$x(\theta)=\cos\theta+c$. The moment identities give
\begin{equation}\label{eq:stability-moments}
 \int |Q|^2\,d\mu=S,\qquad
 \int x|Q|^2\,d\mu=0.
\end{equation}
On the supporting arc, $-\delta\le x\le1+c$, so
\[
 x^2\le(1+c-\delta)x+\delta(1+c).
\]
Multiply by $|Q|^2$ and integrate to obtain
\begin{equation}\label{eq:weighted-square}
 \int x^2|Q|^2\,d\mu\le\delta(1+c)S.
\end{equation}
Taking squared moduli in \eqref{eq:Qidentity} gives the pointwise identity
\[
 x(\theta)^2|Q(e^{i\theta})|^2
 =\frac{s^2}{2}\bigl(1+(-1)^r\cos(r\theta)\bigr).
\]
If $d=d_{\T}(\theta,V_r)$, then $0\le d\le\pi/r$ and
\[
 1+(-1)^r\cos(r\theta)
 =2\sin^2(rd/2)\ge\frac{2r^2}{\pi^2}d^2.
\]
Together with \eqref{eq:weighted-square} and
$s^2=(1-c)(1+c)$, this proves the first inequality in
\eqref{eq:stability}. Since $\alpha_*\ge\pi/2$,
\[
 \delta=\int_{\alpha_*}^{\alpha}\sin u\,du
 \le s(\alpha-\alpha_*).
\]
The second inequality follows from
$s/(1-c)=\cot(t/2)\le2r/\pi$.

For the convergence assertion, probability measures on the circle are
weakly sequentially compact. The estimate forces every subsequential
limit to be supported on $V_r$. Its first $r-1$ moments still vanish,
and Fourier inversion forces equal weights. Every subsequential limit
is therefore the same uniform measure.
\end{proof}

The estimate has a simple form in the exponent variable. For a real
set $\mathcal G$ considered modulo $N$, write
\[
 d_N(a,\mathcal G)=\min_{g\in\mathcal G,\,m\in\Z}|a-g+mN|.
\]

\begin{corollary}\label{cor:exponent-stability}
Under the hypotheses of Theorem~\ref{thm:main}, write
$F(X)=\sum_ac_aX^a$, $b=\min\supp F$, and
\[
 D(F)=\frac{p-1}{p}N+\Delta\le N.
\]
For the real grid
\[
 \mathcal G=\left\{b+\frac\Delta2+\frac{jN}{p}:0\le j<p\right\}
 \pmod N,
\]
one has
\begin{equation}\label{eq:exponent-stability}
 \frac1{F(1)}\sum_ac_a\,d_N(a,\mathcal G)^2\le\frac{N\Delta}{4}.
\end{equation}
Consequently, for every $h>0$, the fraction of coefficient mass at
distance at least $h$ from $\mathcal G$ is at most $N\Delta/(4h^2)$.
\end{corollary}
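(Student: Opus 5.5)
We deduce Corollary~\ref{cor:exponent-stability} from Proposition~\ref{prop:stability} with $r=p$, using the same change of variables as in the proof of Theorem~\ref{thm:main}. Write $D=D(F)$. We shift exponents by $b$ and set
\[
 \alpha=\frac{\pi D}{N},\qquad \theta_a=\frac{2\pi(a-b)}{N}-\alpha,\qquad \mu=\frac1{F(1)}\sum_a c_a\delta_{\theta_a}.
\]
Since $D\le N$, we have $\alpha\le\pi$, and $\mu$ is supported on $[-\alpha,\alpha]$. For $1\le k<p$, the divisibility $\Phi_N\mid F$ together with $\gcd(k,N)=1$ gives $\widehat\mu(k)=0$. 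Theorem~\ref{thm:main} gives $\alpha\ge\alpha_*=\pi-\pi/p$. Finally,
\[
 \alpha-\alpha_*=\frac{\pi}{N}\Bigl(D-\frac{p-1}{p}N\Bigr)=\frac{\pi\Delta}{N}.
\]
So the hypotheses of Proposition~\ref{prop:stability} hold, and its second inequality yields
\[
 \int d_\T(\theta,V_p)^2\,d\mu\le\frac{\pi^2\Delta}{N}.
\]

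Next we identify the grid. A node $-\alpha_*+2\pi j/p$ of $V_p$ corresponds to the exponent $a$ with
\[
 \frac{2\pi(a-b)}{N}-\alpha=-\alpha_*+\frac{2\pi j}{p}.
\]
Solving, $a-b=\frac{N}{2\pi}(\alpha-\alpha_*)+\frac{jN}{p}=\frac\Delta2+\frac{jN}{p}$. This is exactly $\mathcal G$. The map $a\mapsto\theta_a$ is an affine map $\R/N\Z\to\T$ with scale factor $2\pi/N$, so
\[
 d_\T(\theta_a,V_p)=\frac{2\pi}{N}\,d_N(a,\mathcal G).
\]
The angular distance on $\T$ uses representatives modulo $2\pi$, which matches the minimum over $m\in\Z$ in $d_N$.

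Substituting,
\[
 \frac{4\pi^2}{N^2}\cdot\frac1{F(1)}\sum_a c_a\,d_N(a,\mathcal G)^2\le\frac{\pi^2\Delta}{N},
\]
which rearranges to \eqref{eq:exponent-stability}. The tail statement is Chebyshev's (Markov's) inequality applied to $d_N(a,\mathcal G)^2$ under the probability weights $c_a/F(1)$.

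The only delicate points are bookkeeping. One is the sign and offset in identifying $V_p$ with $\mathcal G$, in particular that the half-shift $\Delta/2$ comes from centering the arc. The other is confirming that $\alpha\le\pi$, which is exactly the assumption $D(F)\le N$. Apart from that the argument is a direct substitution.
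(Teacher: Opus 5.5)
Your proposal is correct and follows the paper's proof: you apply Proposition~\ref{prop:stability} with $r=p$ to the coefficient measure, use $\alpha-\alpha_*=\pi\Delta/N$, rescale distances by $N/(2\pi)$ to pass from $V_p$ to $\mathcal G$, and finish with Markov's inequality. Your explicit checks of the half-shift $\Delta/2$ and of the boundary case $D(F)=N$ (where $\alpha=\pi$) spell out steps the paper states in one line.
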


\begin{proof}
Apply Proposition~\ref{prop:stability} to the coefficient measure
\eqref{eq:polynomial-measure}, with $r=p$; the same formula remains
valid for $D(F)=N$.
Its angular excess is $\alpha-\alpha_*=\pi\Delta/N$.
The inverse change of variables sends $V_p$ to $\mathcal G$, and
multiplies angular distances by $N/(2\pi)$. Thus
\[
 \frac1{F(1)}\sum_ac_a\,d_N(a,\mathcal G)^2
 \le\frac{N^2}{4\pi^2}\,\pi\frac{\pi\Delta}{N}
 =\frac{N\Delta}{4}.
\]
The final assertion is Markov's inequality.
\end{proof}

\section*{Acknowledgments}

The authors thank the authors of the works cited in this paper for the
questions and ideas that motivated the present study. OpenAI's GPT Astra
was used during the preparation of the manuscript for language editing
and as a research aid in exploring and identifying useful examples.
All mathematical statements, constructions, and proofs were independently
verified by the authors, who take full responsibility for the contents
of the paper.

\end{document}